\theoremstyle{plain} 
\newtheorem{theorem}{Theorem}
\theoremstyle{definition} 
\theoremstyle{remark}
\newcommand{\R}{\ensuremath{\mathbb{R}}}
\newcommand{\T}{\ensuremath{\mathbb{T}}}
\newcommand{\N}{\ensuremath{\mathbb{N}}}
\numberwithin{equation}{section}
\numberwithin{theorem}{section}
\begin{document}
\author[anderson]{Douglas R. Anderson}
\title[second--order linear equations]{Second--order linear constant coefficient dynamic equations with polynomial forcing on time scales}
\address{Department of Mathematics and Computer Science, Concordia College, Moorhead, MN 56562 USA\\visiting United International College, Zhuhai, Guangdong, China}
\email{andersod@cord.edu}
\urladdr{http://www.cord.edu/faculty/andersod/bib.html}

\keywords{Ordinary difference equations; ordinary dynamic equations; inhomogeneous equations; time scales; reduction of order}
\subjclass[2000]{34N05, 26E70, 39A10}

\begin{abstract}
A general solution for a second-order linear constant coefficient dynamic equation with polynomial forcing on time scales is given.
\end{abstract}

\maketitle

\thispagestyle{empty}


\section{polynomial forcing of second-order ordinary dynamic equations with constant coefficients}
Recall that polynomials on time scales \cite{ab} are defined via $h_0(t,a)\equiv 1$ and recursively for $k\in\N$ by 
$$ h_k(t,a)=\int_{a}^{t}h_{k-1}(\tau,a)\Delta\tau. $$
We have the following result.


\begin{theorem}[Constant Coefficients with Polynomial Forcing]
Let $a\in\T$ and $\alpha,\beta\in\R$. A general solution of the second-order linear constant coefficient equation with polynomial forcing 
\begin{equation}\label{polyf} 
 y^{\Delta\Delta}+2\alpha y^\Delta+\beta y = \sum_{i=0}^{k}\gamma_ih_i(\cdot,a), \quad \gamma_i\in\R,
\end{equation}
with $\beta\ne 0$, $\beta\ne \alpha^2$, and $1-2\alpha\mu+\beta\mu^2\ne 0$ is given by
\begin{equation}\label{polysol}
 y(t) = c_1e_{\lambda_1}(t,a)+c_2e_{\lambda_2}(t,a)+\sum_{i=0}^k \xi_ih_i(t,a), 
\end{equation}
where $\lambda_1=-\alpha-\sqrt{\alpha^2-\beta}$, $\lambda_2=-\alpha+\sqrt{\alpha^2-\beta}$, $\xi_{k-1}=\frac{1}{\beta}\left(\gamma_{k-1}-\frac{2\alpha}{\beta}\gamma_k\right)$, $\xi_{k}=\frac{1}{\beta}\gamma_k$,
$$ \xi_i = \omega_1 \lambda_1^i + \omega_2 \lambda_2^i + \sum_{\tau=0}^{i-1}\sum_{s=0}^{\tau-1} \gamma_s\lambda_1^{i+s-2\tau}\beta^{\tau-1-s} \quad\text{for}\quad i\in\{0,\cdots,k-2\}, $$
and 
\begin{eqnarray*} 
 \begin{bmatrix} \omega_1 \\ \omega_2 \end{bmatrix} &=& \frac{1}{2\beta^{k-1}\sqrt{\alpha^2-\beta}} \left[\begin{array}{rr} \lambda_2^k & -\lambda_2^{k-1} \\ -\lambda_1^k & \lambda_1^{k-1} \end{array}\right] \begin{bmatrix} \dfrac{1}{\beta}\left(\gamma_{k-1}-\dfrac{2\alpha}{\beta}\gamma_k\right) -\displaystyle\sum_{\tau=0}^{k-2}\sum_{s=0}^{\tau-1}\gamma_s\lambda_1^{k-1+s-2\tau}\beta^{\tau-1-s} \\ \dfrac{1}{\beta}\gamma_k -\displaystyle\sum_{\tau=0}^{k-1}\sum_{s=0}^{\tau-1}\gamma_s\lambda_1^{k+s-2\tau}\beta^{\tau-1-s} \end{bmatrix}.
\end{eqnarray*}
\end{theorem}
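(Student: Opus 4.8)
The plan is to write the sought solution as a general solution of the associated homogeneous equation plus a single ``polynomial'' particular solution $y_p=\sum_{i=0}^k\xi_ih_i(\cdot,a)$. For the homogeneous equation $y^{\Delta\Delta}+2\alpha y^\Delta+\beta y=0$, note that $\lambda_1$ and $\lambda_2$ are exactly the roots of $\lambda^2+2\alpha\lambda+\beta=0$; they are distinct because $\beta\neq\alpha^2$, and each is regressive because
\[
 (1+\mu\lambda_1)(1+\mu\lambda_2)=1+\mu(\lambda_1+\lambda_2)+\mu^2\lambda_1\lambda_2=1-2\alpha\mu+\beta\mu^2\neq0
\]
forces both factors to be nonzero (as complex numbers when $\alpha^2<\beta$). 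Hence $e_{\lambda_1}(\cdot,a)$ and $e_{\lambda_2}(\cdot,a)$ are defined, each solves the homogeneous equation, and being distinct regressive exponentials they have nonvanishing Wronskian and therefore span its two-dimensional solution space. Thus every solution of \eqref{polyf} equals $c_1e_{\lambda_1}(\cdot,a)+c_2e_{\lambda_2}(\cdot,a)$ plus any one fixed particular solution, and it remains to exhibit a particular solution of the stated form.

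I would then substitute $y_p=\sum_{i=0}^k\xi_ih_i(\cdot,a)$ into the equation. Since $h_i^\Delta(\cdot,a)=h_{i-1}(\cdot,a)$ for $i\geq1$ and $h_0^\Delta\equiv0$, differentiating twice and reindexing give
\[
 y_p^{\Delta\Delta}+2\alpha y_p^\Delta+\beta y_p=\sum_{i=0}^k\left(\xi_{i+2}+2\alpha\xi_{i+1}+\beta\xi_i\right)h_i(\cdot,a),\qquad\xi_{k+1}:=\xi_{k+2}:=0,
\]
so it suffices to choose the $\xi_i$ with $\xi_{i+2}+2\alpha\xi_{i+1}+\beta\xi_i=\gamma_i$ for $i=0,\dots,k$. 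Only this one-sided sufficiency is invoked, so no linear-independence hypothesis on the $h_i$ is needed and the conclusion holds on every time scale. The cases $i=k$ and $i=k-1$ force $\xi_k=\tfrac1\beta\gamma_k$ and $\xi_{k-1}=\tfrac1\beta(\gamma_{k-1}-\tfrac{2\alpha}\beta\gamma_k)$ (using $\beta\neq0$), matching the statement, and the cases $i\leq k-2$ constitute an inhomogeneous constant-coefficient recurrence.

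To solve that recurrence I would observe that its characteristic polynomial is again $r^2+2\alpha r+\beta$, with roots $\lambda_1,\lambda_2$, and verify that
\[
 P_i:=\sum_{\tau=0}^{i-1}\sum_{s=0}^{\tau-1}\gamma_s\lambda_1^{i+s-2\tau}\beta^{\tau-1-s}
\]
is a particular solution. The cleanest check uses $\beta=\lambda_1\lambda_2$ to rewrite $P_i=\sum_{m\geq0}\lambda_1^mQ_{i-2-m}$, where $Q_n:=\sum_{s=0}^n\gamma_s\lambda_2^{n-s}$ (with $Q_n:=0$ for $n<0$) obeys $Q_n-\lambda_2Q_{n-1}=\gamma_n$; in generating-function terms this convolution reads
\[
 \sum_{i\geq0}P_ix^i=\frac{x^2\sum_{s\geq0}\gamma_sx^s}{(1-\lambda_1x)(1-\lambda_2x)},
\]
and since $(1-\lambda_1x)(1-\lambda_2x)=1+2\alpha x+\beta x^2$, equating coefficients yields $P_{i+2}+2\alpha P_{i+1}+\beta P_i=\gamma_i$ for all $i\geq0$. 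Setting $\xi_i=\omega_1\lambda_1^i+\omega_2\lambda_2^i+P_i$, I would pick $\omega_1,\omega_2$ so that evaluating this at $i=k-1$ and $i=k$ reproduces the already-fixed values $\xi_{k-1},\xi_k$; this is the $2\times2$ system
\[
 \begin{bmatrix}\lambda_1^{k-1}&\lambda_2^{k-1}\\\lambda_1^{k}&\lambda_2^{k}\end{bmatrix}\begin{bmatrix}\omega_1\\\omega_2\end{bmatrix}=\begin{bmatrix}\xi_{k-1}-P_{k-1}\\\xi_{k}-P_{k}\end{bmatrix},
\]
whose determinant is $(\lambda_1\lambda_2)^{k-1}(\lambda_2-\lambda_1)=2\beta^{k-1}\sqrt{\alpha^2-\beta}\neq0$. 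Cramer's rule then delivers precisely the displayed matrix formula, the column entries being $\xi_{k-1}-P_{k-1}$ and $\xi_k-P_k$ with $\xi_{k-1},\xi_k,P_{k-1},P_k$ written out. Finally, the sequence $\xi_i=\omega_1\lambda_1^i+\omega_2\lambda_2^i+P_i$ satisfies the full recurrence: the cases $i=k-1,k$ hold because $\xi_{k-1},\xi_k$ were defined to satisfy them, and the cases $0\leq i\leq k-2$ hold because $\lambda_1^i,\lambda_2^i$ are annihilated by $r^2+2\alpha r+\beta$ while $P_i$ satisfies the inhomogeneous recurrence.

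I expect the principal obstacle to be this verification that $P_i$ solves the recurrence: collapsing the nested double sum to the single convolution, keeping track of the empty-range conventions at small $i$, and confirming that the extension of $\omega_1\lambda_1^i+\omega_2\lambda_2^i+P_i$ to $i=k-1,k$ matches $\xi_{k-1},\xi_k$, which is exactly what pins down the stated closed form for $\omega_1,\omega_2$. The other ingredients --- the homogeneous theory, the substitution and reindexing, the evaluations at $i=k$ and $i=k-1$, and the $2\times2$ inversion --- are routine.
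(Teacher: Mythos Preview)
Your proposal is correct and follows essentially the same route as the paper: decompose into homogeneous plus polynomial particular solution, substitute to obtain the recurrence $\xi_{i+2}+2\alpha\xi_{i+1}+\beta\xi_i=\gamma_i$ with terminal values $\xi_{k-1},\xi_k$, and then solve the recurrence as homogeneous plus particular and fix $\omega_1,\omega_2$ by the $2\times2$ terminal system. The only difference is that where the paper simply cites a reference for the closed form $P_i$ of the particular solution of the recurrence, you supply a self-contained generating-function verification; this is additional detail rather than a different method.
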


\begin{proof}
By the theory of linear dynamic equations \cite[Theorem 3.16]{bp1}, a general solution of \eqref{polyf} has the form
\begin{equation}
 y(t) = c_1e_{\lambda_1}(t,a)+c_2e_{\lambda_2}(t,a)+y_d(t) 
\end{equation}
for 
\begin{equation}\label{lambs}
 \lambda_1=-\alpha-\sqrt{\alpha^2-\beta} \quad\text{and}\quad \lambda_2=-\alpha+\sqrt{\alpha^2-\beta}, 
\end{equation} 
where the time scale exponential functions $e_{\lambda_1}(\cdot,a)$ and $e_{\lambda_2}(\cdot,a)$ are solutions of the corresponding homogeneous equation
$$  y^{\Delta\Delta}+2\alpha y^\Delta+\beta y = 0, $$
and $y_d$ is a particular solution of the inhomogeneous equation \eqref{polyf}. Here $y_1=e_{\lambda_1}(\cdot,a)$ and $y_2=e_{\lambda_2}(\cdot,a)$ are linearly independent since their Wronskian
$$ W(y_1,y_2)=y_1y_2^\Delta-y_1^\Delta y_2=(\lambda_2-\lambda_1)y_1y_2=2\sqrt{\alpha^2-\beta}\;e_{\lambda_1\oplus\lambda_2}(\cdot,a)=2\sqrt{\alpha^2-\beta}\;e_{(-2\alpha+\mu\beta)}(\cdot,a) $$
is well defined due to the regressivity assumption $1-2\alpha\mu+\beta\mu^2\ne 0$ and is never zero on $\T$.
We guess that a particular solution $y_d$ has the form
$$ y_d = \displaystyle\sum_{i=0}^{k}\xi_ih_i(\cdot,a), \qquad \xi_i\in\R. $$
Then 
$$ y_d^\Delta = \sum_{i=0}^{k-1}\xi_{i+1}h_i(\cdot,a) \quad\text{and}\quad y_d^{\Delta\Delta} = \sum_{i=0}^{k-2}\xi_{i+2}h_i(\cdot,a). $$
Plugging into \eqref{polyf} and solving for the coefficients we have
\begin{equation}\label{diffeterm}
 \xi_{k-1}=\frac{1}{\beta}\left(\gamma_{k-1}-\frac{2\alpha}{\beta}\gamma_k\right) \quad\text{and}\quad \xi_{k}=\frac{1}{\beta}\gamma_k, 
\end{equation}
while the rest of the coefficients $\xi_i$ satisfy the second-order linear constant coefficient difference equation with forcing given by
\begin{equation}\label{diffegam}
 \xi_{i+2}+2\alpha\xi_{i+1}+\beta\xi_i=\gamma_i, \quad i=0,\ldots,k-2. 
\end{equation}
The preceding difference equation \eqref{diffegam} has the solution \cite[Section 4]{at}
$$ \xi_i = \omega_1 \lambda_1^i + \omega_2 \lambda_2^i + \sum_{\tau=0}^{i-1}\sum_{s=0}^{\tau-1} \gamma_s\lambda_1^{i+s-2\tau}\beta^{\tau-1-s} $$
where $\lambda_1$ and $\lambda_2$ are given in \eqref{lambs} above. Using the terminal conditions in \eqref{diffeterm} and simple linear algebra we have 
\begin{eqnarray*} 
 \begin{bmatrix} \omega_1 \\ \omega_2 \end{bmatrix} &=& \frac{1}{2\beta^{k-1}\sqrt{\alpha^2-\beta}} \left[\begin{array}{rr} \lambda_2^k & -\lambda_2^{k-1} \\ -\lambda_1^k & \lambda_1^{k-1} \end{array}\right] \begin{bmatrix} \dfrac{1}{\beta}\left(\gamma_{k-1}-\dfrac{2\alpha}{\beta}\gamma_k\right) -\displaystyle\sum_{\tau=0}^{k-2}\sum_{s=0}^{\tau-1}\gamma_s\lambda_1^{k-1+s-2\tau}\beta^{\tau-1-s} \\ \dfrac{1}{\beta}\gamma_k -\displaystyle\sum_{\tau=0}^{k-1}\sum_{s=0}^{\tau-1}\gamma_s\lambda_1^{k+s-2\tau}\beta^{\tau-1-s} \end{bmatrix}.
\end{eqnarray*}

\end{proof}


\end{document}